\newtheorem{theorem}{Theorem}
\newtheorem{lemma}{Lemma}[section]
\newtheorem{note}{Note}[section]
\newtheorem{remark}{Remark}[section]
\title{Generalization on the higher moments of the Fourier coefficients of symmetric power $L$-functions} 
\author{K. Venkatasubbareddy\\Email: \href{venkatasubbareddy7313@gmail.com}{venkatasubbareddy7313@gmail.com}
\\Department of Mathematical Sciences, IISER Berhampur,\\
Berhampur, Odisha, India-760003}
\date{}
\begin{document}
\maketitle
Dedicated to Prof. A. Sankaranarayanan on the occasion of his 65th birthday.

\begin{abstract}
For an even integer $k\geq 2$, let $f$ be a primitive holomorphic cusp form of weight $k$ for the full modular group $SL(2,\mathbb{Z})$ and let $\lambda_{{\rm{sym}}^jf}(n)$ denote the $n^\text{th}$ normalized Fourier coefficient of the $j^{\text{th}}$ symmetric power $L$-function $L(s,{\rm{sym}}^j f)$. It has been an interesting problem to study the average behaviour of $\lambda_{{\rm{sym}}^jf}(n)$ and their higher powers, and many researchers in the literature have studied the sum
\begin{equation*}
    \sum_{n\leq x} \lambda_{{\rm{sym}}^j}^l(n),
\end{equation*}
for various values of $l$ and $j$. In this paper, we improve and generalize previously known results concerning the sum above for positive integers $l$ and $j$ such that $lj\geq 4$.
\end{abstract}

\footnote{2020 AMS \emph{Mathematics subject classification.} Primary 11F11, 11F30, 11M06.}
\footnote{\emph{Keywords and phrases.} Fourier coefficients of automorphic forms, Dirichlet series, Riemann zeta function, Perron formula.} 

\section{Introduction}
For an even integer $k\geq 2$, let $f$ be a primitive holomorphic cusp form of weight $k$ for the full modular group $SL(2,\mathbb{Z})$. Here, the term primitive refers that $f$ is an eigenfunction of all Hecke operators simultaneously. Throughout the paper, we refer to $f$ as a primitive holomorphic cusp form and $H_k$ as the set of all primitive holomorphic cusp forms of weight $k$ for the full modular group $SL(2,\mathbb{Z})$. It is well known that $f(z)$ has a Fourier series expansion at the cusp $\infty$ as
\begin{equation*}
    f(z)=\sum_{n=1}^\infty \lambda_f(n)n^{(k-1)/2}e^{2\pi i n z}
\end{equation*}
for $\Im (z)>0$, where $\lambda_f(n)$ are the normalized Fourier coefficients satisfying the multiplicative property that 
\begin{equation*}
    \lambda_f(m)\lambda_f(n)=\sum_{d|(m,n)}\lambda_f(\frac{mn}{d^2}) 
\end{equation*}
for all integers $m,n\geq 1$. In 1974, Deligne \cite{Deligne} proved the Ramanujan-Petersson conjecture that $|\lambda_f(n)|\leq d(n)$, where $d(n)$ is the divisor function and which is equivalent to say that for each prime $p$, there exist two complex numbers, namely $\alpha_p$ and $\beta_p$ such that
\begin{equation*}
    \alpha_p\beta_p=|\alpha_p|=|\beta_p|=1 \text{ and } \lambda_f(p)=\alpha_p+\beta_p.
\end{equation*}

For integers $j\geq 1$, the $j^{\text{th}}$ symmetric power $L$-function attached to $f$ is defined as
\begin{equation*}
        L(s, {\rm{sym}}^jf)=\prod_p\prod_{m=0}^j(1-\alpha_p^{j-2m}p^{-s})^{-1}=\sum_{n=1}^\infty \frac{\lambda_{{\rm{sym}}^j f}(n)}{n^s}
\end{equation*}
for $\Re (s)>1$. It is well known that $\lambda_{{\rm{sym}}^j f}(n)$ is a real multiplicative function and $\lambda_{{\rm{sym}}^j f}(p)=\lambda_f(p^j)$ for each prime $p$ and integers $j\geq 1$. Note that $\displaystyle L(s,{\rm{sym}}^0 f)=\zeta(s)$ (Riemann zeta function) and $L(s,{\rm{sym}}^1 f)=L(s,f)$ (Hecke $L$-function).

From the works of Gelbart and Jacquet \cite{Gelbart and Jacquet1978}, Kim and Shahidi \cite{Kim 2003, Kim and Shahidi2002, Kim and Shahidi2002 II}, the $L$-functions $L(s,{\rm{sym}}^jf)$ for $j=1,2,3,4 $ admit analytic continuation and satisfy a functional equation of the Riemann zeta type. In fact, from the recent work of Newton and Thorne \cite{Newton and Thorne2021, Newton and Thorne2021 II} on the symmetric power lifts of Hecke eigenforms, all the $L$-functions $L(s,{\rm{sym}}^jf)$, for $j\geq 1$ are holomorphic and satisfy a functional equation of Riemann zeta type. Thus, for $j\geq 1$, $L(s,{\rm{sym}}^jf)$ are general $L$-functions in the sense of Perelli.

It has been an interesting problem to study the sum 
\begin{equation*}
    \sum_{n\leq x} \lambda_{{\rm{sym}}^j}^l(n)
\end{equation*}
for positive integers $l$ and $j$. In this direction, many authors have contributed to this problem in the literature. For example, in 2008, Fomenko \cite{Fomenko2008} studied the sum for $l=j=2$, and proved that 
\begin{equation*}
    \sum_{n\leq x} \lambda_{{\rm{sym}}^2}^2(n)=\mathcal{C}_1x+O(x^\gamma)
\end{equation*}
for some $\gamma<1$. Later, this result has been improved and generalized by various authors; see \cite{He}, \cite{Lao2010}, \cite{Lao2012}. In 2019, Sankaranarayanan et al. \cite{AS Singh and Srinivas} proved that 
\begin{align*}
   & \sum_{n\leq x} \lambda_{{\rm{sym}}^3}^2(n)=\mathcal{C}_3x+O(x^{\frac{15}{17}+\varepsilon})\\
   & \sum_{n\leq x} \lambda_{{\rm{sym}}^4}^2(n)=\mathcal{C}_4x+O(x^{\frac{12}{13}+\varepsilon}).
\end{align*}
In 2021, Luo et al. \cite{Luo Lao and Zou} established that
\begin{equation*}
   S_l(x):= \sum_{n\leq x} \lambda_{{\rm{sym}}^2}^l(n)=xP_l(\log x)+O(x^{\theta_l+\varepsilon}),
\end{equation*}
where $P_l(t)$ is a polynomial in $t$ with deg $P_3 = 0$, deg $P_4 = 2$, deg $P_5 = 5$, deg $P_6 = 14$, deg $P_7 = 35$ and deg $P_8 = 90$, and the exponents $\theta_l$ are given 
by 
\begin{align*}
    \theta_3=\frac{971}{1055},\theta_4=\frac{262}{269},\theta_5=\frac{3237}{3265},\theta_6=\frac{4923}{4937},\theta_7=\frac{7442}{7449},\theta_8=\frac{89771}{89779}.
\end{align*}
Furthermore, in the same paper, they established the following asymptotic formulae
\begin{equation*}
    T_j(x):=\sum_{n\leq x} \lambda_{{\rm{sym}}^j}^2(n)=\Tilde{c}_jx+\begin{cases}
        O(x^{\theta_j^*+\varepsilon}),& j=3,4,5,6\\
        O(x^{\theta_j^*}),& j=7,8,
    \end{cases}
\end{equation*}
where $\Tilde{c}_j$ are some constants, and the exponents $\theta_j^*$ are given by
\begin{align*}
    \theta_3^*=\frac{551}{635},\theta_4^*=\frac{929}{1013},\theta_5^*=\frac{1391}{1475},
    \theta_6^*=\frac{979}{1021},\theta_7^*=\frac{63}{65},\theta_8^*=\frac{40}{41}.
\end{align*}
In 2023, Liu \cite{Liu2023} has improved the error term bounds for both $S_l(x)$ and $T_j(x)$, and the improved exponents are as follows.
\begin{align*}
    \theta_3=\frac{1367}{1487},\theta_4=\frac{1483}{1523},\theta_5=\frac{459}{463},
    \theta_6=\frac{12237}{12272},\theta_7=\frac{74069}{74139},\theta_8=\frac{335197}{335302}
\end{align*}
and 
\begin{align*}
    \theta_2^*=\frac{389}{509},\theta_3^*=\frac{779}{899},\theta_4^*=\frac{1319}{1439},\theta_5^*=\frac{1979}{2099},
    \theta_6^*=\frac{2759}{2879},\theta_7^*=\frac{3659}{3779},\theta_8^*=\frac{4679}{4799}.
\end{align*}
In this paper, we further improve the above exponents of Liu; moreover, we generalize the work to the sum 
\begin{equation*}
    \sum_{n\leq x} \lambda_{{\rm{sym}}^j}^l(n)
\end{equation*}
for positive integers $l$ and $j$ such that $lj\geq 4$. Throughout the paper, we assume that $l$ and $j$ are positive integers and $\varepsilon$ is some small positive, which need not be the same at each occurrence.

Precisely, we prove:
\begin{theorem}
    For any $\varepsilon>0$, we have
    \begin{equation*}
        \sum_{n\leq x} \lambda_{{\rm{sym}}^j}^l(n)=\begin{cases}
            xP_{d_{\frac{lj}{2}}-1}(\log x)+O\left(x^{\theta_{l,j}+\varepsilon}\right)&\text{ if $lj$ is even}\\
            O\left(x^{\theta_{l,j}+\varepsilon}\right)&\text{ if $lj$ is odd},
        \end{cases}
    \end{equation*}
    where $P_k(y)$ is a polynomial of degree $k$ in $y$, and 
    \begin{equation*}
        \theta_{l,j}=
        \begin{cases}
        1-\frac{126j^{3/2}}{63(D-d_2)j^{3/2}+16\sqrt{15}d_2}&\text{ if $lj=4$}\\
        1-\frac{630j^{3/2}}{j^{3/2}(315D-315d_{\frac{lj}{2}}-189d_{\frac{lj}{2}-1})+80\sqrt{15}d_{\frac{lj}{2}}}&\text{ if $lj\geq6$ is even}\\
        1-\frac{6}{3D-2e_{\frac{lj-1}{2}}}&\text{ if $lj\geq5$ is odd},
        \end{cases}
    \end{equation*}
    $D=(j+1)^l$ is the degree of the $L$-function $L_{l,j}(s)$ defined in Lemma \ref{L2.3}, $d_i$'s and $e_i$'s are as in Lemma \ref{L2.2}.\label{T1}
\end{theorem}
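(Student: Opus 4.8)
The plan is to realize the Dirichlet series $L_{l,j}(s)=\sum_{n\ge 1}\lambda_{{\rm{sym}}^j}^l(n)\,n^{-s}$ as an explicit product of the Riemann zeta function and symmetric power $L$-functions, and then to extract the partial sum by contour integration. First I would use the relation $\lambda_{{\rm{sym}}^jf}(p)=\sum_{m=0}^{j}\alpha_p^{j-2m}$ to view $\lambda_{{\rm{sym}}^j}^l(p)$ as the $l$-th power of the character of ${\rm{Sym}}^j$ of $SU(2)$ and decompose it into irreducible characters; this is the content of Lemma \ref{L2.2}, which records the resulting multiplicities $d_i$ in the even case and $e_i$ in the odd case. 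Inserting this decomposition into the Euler product gives, via Lemma \ref{L2.3}, a factorization
\[
L_{l,j}(s)=\zeta(s)^{d_{lj/2}}\prod_{a}L(s,{\rm{sym}}^{a}f)^{m_a}\,U_{l,j}(s),
\]
in which $U_{l,j}(s)$ is defined by an Euler product that converges absolutely and is uniformly bounded in a half-plane $\Re s>\tfrac12$. The decisive structural feature is the parity dichotomy: the trivial representation occurs in the $l$-fold tensor power of ${\rm{Sym}}^j$ exactly when $lj$ is even, so $L_{l,j}(s)$ has a pole at $s=1$ of order $d_{lj/2}$ when $lj$ is even and is holomorphic at $s=1$ when $lj$ is odd. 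This is precisely what produces the main term in the even case and none in the odd case.

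Next I would apply the truncated Perron formula with $c=1+1/\log x$. Since $|\lambda_{{\rm{sym}}^j}^l(n)|\ll n^{\varepsilon}$ by the Ramanujan–Deligne bound, the truncation error is $O(x^{1+\varepsilon}/T)$, so that
\[
\sum_{n\le x}\lambda_{{\rm{sym}}^j}^l(n)=\frac{1}{2\pi i}\int_{c-iT}^{c+iT}L_{l,j}(s)\,\frac{x^{s}}{s}\,ds+O\!\left(\frac{x^{1+\varepsilon}}{T}\right).
\]
I would then shift the contour to a line $\Re s=\sigma_0$ with $\sigma_0\in[\tfrac12,1)$ to be optimized. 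Because every $L(s,{\rm{sym}}^{a}f)$ is entire (by the automorphy established by Newton and Thorne), the only singularity crossed is at $s=1$, and the residue of $L_{l,j}(s)\,x^s/s$ there equals $x\,P_{d_{lj/2}-1}(\log x)$ for a polynomial of degree $d_{lj/2}-1$; in the odd case no residue appears.

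The heart of the matter is estimating the vertical and horizontal segments of the shifted contour, and this is where the exponents $\theta_{l,j}$ are produced. The vertical contribution is $\ll x^{\sigma_0}\int_{1}^{T}|L_{l,j}(\sigma_0+it)|\,t^{-1}\,dt$, so I need a sharp mean-value bound for $L_{l,j}$ on the line $\Re s=\sigma_0$. Rather than applying a crude convexity estimate to the whole degree-$D$ product, I would separate the zeta factor from the symmetric power factors and combine, via Hölder's inequality, the fourth-power moment of $\zeta$ with the best available subconvexity and Rankin–Selberg mean-square bounds for the individual $L(s,{\rm{sym}}^{a}f)$. Tracking the analytic conductor of ${\rm{sym}}^{a}f$, which grows polynomially in $j$, is what introduces the factors $j^{3/2}$ and $\sqrt{15}$ into the final estimate, while the multiplicities of the zeta factor and of the leading symmetric power enter through $D$, $d_{lj/2}$ and $d_{lj/2-1}$. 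Optimizing the free parameters $T$ and $\sigma_0$ so as to balance these contributions against the Perron error $x^{1+\varepsilon}/T$ yields the stated value of $\theta_{l,j}$.

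I expect the principal obstacle to be this mean-value estimate: to beat Liu's exponents one must use the sharpest moment input available for the symmetric power $L$-functions and keep careful account of the conductor dependence, and the optimization has to be carried out separately in the three regimes $lj=4$, $lj\ge 6$ even, and $lj\ge 5$ odd. The case $lj=4$ is genuinely special, since there the product $L_{l,j}(s)$ is short enough to admit a stronger direct estimate, which accounts for its distinct closed form. The remaining ingredients — the parity bookkeeping in the decomposition of the tensor power, and the verification that $U_{l,j}(s)$ truly converges for $\Re s>\tfrac12$ — are routine but must be handled with care, because both the degree of the main-term polynomial and the convergence abscissa feed directly into the error exponent.
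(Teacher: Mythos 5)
Your overall architecture matches the paper's: decompose $\lambda_{{\rm{sym}}^j}^l(p)$ into the coefficients $\lambda_{{\rm{sym}}^{lj-2m}}(p)$ with multiplicities $d_m$ (resp.\ $e_m$), factor $L_{l,j}(s)$ accordingly with an absolutely convergent remainder $U_{l,j}(s)$ for $\Re(s)>\tfrac12$, apply truncated Perron, shift the contour, pick up the pole of order $d_{lj/2}$ at $s=1$ exactly when $lj$ is even, and balance the vertical-line contribution against $x^{1+\varepsilon}/T$. You also correctly isolate $lj=4$ as a special case (the paper's reason is concrete: the Cauchy--Schwarz split of the product into a mean square of $L^{d_{lj/2-2}}(s,{\rm{sym}}^4 f)$, a mean square of the remaining factors, and a sup of $\zeta^{d_{lj/2}}L^{d_{lj/2-1}}(s,{\rm{sym}}^2 f)$ requires $lj\ge 6$).

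However, two points in your plan would not lead to the stated exponents. First, you attribute the factors $j^{3/2}$ and $\sqrt{15}$ to the growth of the analytic conductor of ${\rm{sym}}^a f$ in $j$. That is not where they come from: the forms here have level one, and both constants enter solely through Heath-Brown's bound $\zeta(\sigma+it)\ll |t|^{\frac{8\sqrt{15}}{63}(1-\sigma)^{3/2}+\varepsilon}$ evaluated on the \emph{fixed} line $\sigma=1-j^{-3}$, which yields the term $d_{lj/2}\cdot\frac{8\sqrt{15}}{63}\cdot j^{-9/2}$ in the exponent and hence $80\sqrt{15}\,d_{lj/2}/j^{3/2}$ after balancing. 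Your plan of optimizing a free $\sigma_0\in[\tfrac12,1)$ and of using the fourth-power moment of $\zeta$ would produce a different (and, for the powers $\zeta^{d_{lj/2}}$ with $d_{lj/2}$ large, unusable) bound; the paper instead takes the pointwise Heath-Brown estimate on $\zeta^{d_{lj/2}}$ together with the Lin--Nunes--Qi subconvexity bound $L(\sigma+it,{\rm{sym}}^2f)\ll(10+|t|)^{\frac{6}{5}(1-\sigma)+\varepsilon}$ and Perelli's general mean-value theorem $\int_T^{2T}|\mathfrak{L}(\sigma+it)|^2dt\ll T^{\max\{m(1-\sigma),1\}+\varepsilon}$ for the degree-$m$ factors. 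Second, you do not address the horizontal segments: since $\zeta$ and the symmetric power $L$-functions are only controlled in mean on vertical lines, one needs to choose the truncation height $T=T^*$ as in the Ramachandra--Sankaranarayanan lemma so that $\zeta(\sigma+iT^*)\ll T^\varepsilon$ uniformly on the horizontal line; without some such device the horizontal contribution is not obviously dominated by the vertical one.
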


\begin{remark}
   We note that our method does not apply in the case $lj \leq 3$. In these cases, the decomposition of the $L$-function $L_{l,j}(s)$ contains at most two factors, which creates a barrier to applying subconvexity bounds. Consequently, we must instead rely on the Cauchy--Schwarz or H\"older inequality, which results in a loss.
\end{remark}

\begin{remark}
   It is not difficult to obtain a slight further improvement of our results in the case when $lj$ is even, by moving the line of integration to $\Re(s)=1-\sigma(j)$ with $\sigma(j)<\frac{1}{j^3}$, and then following the same arguments as in the proof of Theorem \ref{T1}. For example, in the case $l=j=2$ ($d_2=1, D=9$), if we move the line of integration to $1-\frac{1}{j^{18}}$, the error term can be refined to $x^{\frac{3}{4}+\delta+\varepsilon}$ with $\delta<10^{-7}$. Similarly, in the case $lj\geq 6$, by moving the line of integration to $\Re(s)=1-\frac{1}{j^a}$, for $a\geq 4$ and following the same arguments as in the proof of Theorem \ref{T1}, we can refine the error terms to $x^{\theta_{l,j}^*+\varepsilon}$ with
    \begin{equation*}
        \theta_{l,j}^*=1-\frac{630}{(315D-315d_{\frac{lj}{2}}-189d_{\frac{lj}{2}-1})}+\delta,
    \end{equation*}
    where $\delta:=\delta(l, j, a)$ is too close to zero for large $a$.  
\end{remark}
The following table indicates the previous exponents, the values of $\theta_{l,j}$ from Theorem \ref{T1} and the possible refined error terms $\theta_{l,j}^*$ for some values of $l$ and $j$.
\begin{table}[!ht]
    \begin{tabular}{r|c|c|c}
       $j=2$, $l=$&\text{Previous exponents}& $\theta_{l,j}$&$\theta_{l,j}^*$   \\\hline
       2 &0.7642...& 0.7604...&0.75\\
       3  &0.9193...& 0.9185...&0.91735537...\\
        4  & 0.9737...& 0.9734...&0.97311827...\\
        5  & 0.99136...&0.991307... &0.99122807...\\
       6   &0.99714...& 0.997133...&0.99711149...\\
        7  &0.9990558...&0.9990516... &0.99904598...\\
        8  &0.9996868...&0.9996852... &0.99968408...
    \end{tabular}
    \end{table}
  \begin{table}[!ht]
    \begin{tabular}{r|c|c|c}
      $l=2$, $j=$ &\text{Previous exponents}& $\theta_{l,j}$&$\theta_{l,j}^*$  \\\hline
      2 &0.764...& 0.7604...&0.75\\
      3  &0.866...& 0.8629...&0.86111111...\\
        4  & 0.916...& 0.9149...&0.91452991...\\
        5  &0.9428...&0.9420... &0.94186046...\\
       6   &0.9583...&0.957865...&0.95780590...\\
        7  &0.9682...&0.967975... &0.96794871...\\
        8  & 0.97499...&0.9748248... &0.97481108...
    \end{tabular}
\end{table}
Our results improve upon those of \cite{Liu2023} and \cite{Luo Lao and Zou}.
\section{Lemma's}
\begin{lemma}\label{L2.1}
    For any $x$ we have
    \begin{equation}
        (1+x^2+x^4+\cdots+x^{2j})^l=\sum_{m=0}^{lj}c_mx^{2m},\label{E1}
    \end{equation}
    where $c_{m}:=c_m(l,j)$ denotes the number of integral solutions of the equation 
    \begin{equation*}
        m_1+m_2+\cdots+m_l=m\qquad 0\leq m_i\leq j. 
    \end{equation*}
    By the inclusion-exclusion principle, 
    \begin{equation}
       c_{m}=
\sum_{r=0}^{\left\lfloor \frac{m}{\,j+1\,} \right\rfloor}
(-1)^{r}\binom{l}{r}
\binom{m-r(j+1)+l-1}{\,l-1\,},
\qquad 0\le m\le lj \label{E2}
    \end{equation} 
    with $\binom{n}{r}=0$ if $n<r$.
    Moreover, the coefficients $c_m$'s are Palindromic, that is, $c_m=c_{lj-m}$ and satisfy the chain of inequalities $c_0\leq c_1\leq c_2\leq \cdots\leq c_{\frac{lj}{2}}$ and $c_{\frac{lj}{2}}\geq c_{\frac{lj}{2}+1}\geq \cdots\geq c_{lj-1}\geq c_{lj}$ when $lj$ is even and $c_0\leq c_1\leq c_2\leq \cdots\leq c_{\frac{lj-1}{2}}$ and $c_{\frac{lj-1}{2}}\geq c_{\frac{lj+1}{2}}\geq \cdots\geq c_{lj-1}\geq c_{lj}$ when $lj$ is odd.
\end{lemma}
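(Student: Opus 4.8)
The plan is to establish the four assertions of the lemma in turn, reserving the unimodality chain as the principal difficulty. A convenient first move is the substitution $y=x^2$, under which $1+x^2+\cdots+x^{2j}=\sum_{i=0}^{j}y^i$ and $(1+x^2+\cdots+x^{2j})^l=\bigl(\sum_{i=0}^{j}y^i\bigr)^l$. Expanding the $l$-fold product and collecting the monomials $y^{m_1+\cdots+m_l}$ over all tuples with $0\le m_i\le j$, the coefficient of $y^m$ (equivalently of $x^{2m}$) is exactly the number of such tuples summing to $m$; this is \eqref{E1} together with its stated combinatorial meaning. In particular $c_m$ is the coefficient of $y^m$ in $(1+y+\cdots+y^j)^l$, a reformulation I will use throughout the rest of the argument.

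For the closed form \eqref{E2}, I would count the solutions of $m_1+\cdots+m_l=m$ with $m_i\ge 0$ and $m_i\le j$ by inclusion--exclusion over the forbidden events $m_i\ge j+1$. The number of nonnegative solutions with no upper constraint is $\binom{m+l-1}{l-1}$ by stars and bars; for a fixed choice of $r$ indices forced to satisfy $m_i\ge j+1$, the shift $m_i\mapsto m_i-(j+1)$ produces $\binom{m-r(j+1)+l-1}{l-1}$ solutions, and there are $\binom{l}{r}$ such choices. The resulting alternating sum, truncated at $r=\lfloor m/(j+1)\rfloor$ since the inner binomial vanishes once $m-r(j+1)<0$, yields \eqref{E2}. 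For the palindromic property, I would note that $g(x)=1+x^2+\cdots+x^{2j}$ is reciprocal of degree $2j$, i.e. $x^{2j}g(1/x)=g(x)$, hence $g(x)^l$ is reciprocal of degree $2lj$, i.e. $x^{2lj}g(1/x)^l=g(x)^l$. Writing $g(x)^l=\sum_m c_m x^{2m}$ and matching coefficients on both sides gives $c_m=c_{lj-m}$.

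The chain of inequalities is the main obstacle, being the only genuinely non-formal assertion, and I would prove it by induction on $l$. For $l=1$ the coefficient sequence is $1,1,\ldots,1$, which is symmetric and unimodal. For the inductive step, set $P_{l-1}(y)=\sum_m b_m y^m$, of degree $N=(l-1)j$, symmetric about $N/2$ and unimodal by hypothesis; then $P_l(y)=P_{l-1}(y)(1+y+\cdots+y^j)$ has coefficients given by the sliding-window sum $c_m=\sum_{i=0}^{j}b_{m-i}$. Symmetry $c_m=c_{lj-m}$ follows directly from $b_{N-k}=b_k$, and the telescoping identity $c_{m+1}-c_m=b_{m+1}-b_{m-j}$ reduces the ascending half of unimodality to the inequality $b_{m+1}\ge b_{m-j}$ for $m+1\le lj/2$. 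Since $b$ is symmetric about $N/2$ and unimodal, one has $b_a\ge b_{a'}$ whenever $a$ lies at least as close to $N/2$ as $a'$; the midpoint of $m-j$ and $m+1$ is at most $N/2$ precisely when $2m\le lj-1$, which holds throughout the range $m+1\le lj/2$, while boundary indices lying outside $[0,N]$ contribute $b=0$ and are handled directly. This gives $c_{m+1}\ge c_m$ up to the center, and the descending half then follows from the palindromic symmetry already established.

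I expect this inductive sliding-window argument to be the cleanest route to unimodality, as it is self-contained and sidesteps the more general (and heavier) theorem that a product of nonnegative symmetric unimodal polynomials is again symmetric and unimodal, which could otherwise be invoked as a black box. The remaining bookkeeping — verifying the vanishing convention $\binom{n}{r}=0$ for $n<r$ is compatible with the truncation in \eqref{E2}, and checking the parity cases $lj$ even versus odd in the final chain — is routine once the sliding-window inequality is in place.
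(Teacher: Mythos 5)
Your proof is correct, but it takes a genuinely different route from the paper's at the two substantive points. For the closed form \eqref{E2} you actually carry out the inclusion--exclusion over the events $m_i\ge j+1$ (stars and bars plus the shift $m_i\mapsto m_i-(j+1)$), whereas the paper simply cites Eger for this identity; your version is self-contained at essentially no extra cost. For the unimodality chain, the paper argues directly on compositions: it defines a map $\Phi$ sending a solution of $m$ to a solution of $m+1$ by incrementing the entry at the least index $i_0$ with $m_{i_0}<j$, and claims $\Phi$ is injective. You instead induct on $l$, writing $c_m=\sum_{i=0}^{j}b_{m-i}$ and telescoping to $c_{m+1}-c_m=b_{m+1}-b_{m-j}$, then using symmetry and unimodality of the $b_k$ to verify $b_{m+1}\ge b_{m-j}$ in the required range (your midpoint computation $2m\le lj-1$ checks out, and the boundary cases $m-j<0$ and $m+1>N$ are indeed vacuous or trivial for $l\ge 2$). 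Your route is not only different but arguably safer: the map $\Phi$ as literally defined in the paper is not injective --- for instance with $l=3$, $j=2$, $m=2$ one has $\Phi((2,0,0))=\Phi((1,1,0))=(2,1,0)$, since the "least index with entry less than $j$" differs between the two preimages --- so the paper's recovery step does not work as stated, even though the inequality $c_m\le c_{m+1}$ is of course true. Your inductive sliding-window argument avoids this entirely and is a complete proof; the paper's approach, when repaired (e.g., by a more careful injection on compositions), has the merit of being a one-line bijective picture, but as written your version is the more rigorous of the two.
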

\begin{proof}
   We write 
   \begin{align*}
        (1+x^2+x^4+\cdots+x^{2j})^l&=\left(\sum_{m=0}^jx^{2m}\right)^l\\
        &=\sum_{m_1=0}^j\sum_{m_2=0}^j\cdots\sum_{m_l=0}^j x^{2(m_1+m_2+\cdots+m_l)}\\
        &=\sum_{m=0}^{lj}c_mx^{2m},
   \end{align*}
    where $c_m$ denotes the number of ways of writing $m$ as the sum of $l$ integers $m_i$ with $0\leq m_i\leq j$. We can also see \eqref{E1} in \cite{Fahssi} through polynomial expansion. The formula \eqref{E2} for $c_m$ follows from \cite[Section 6]{Eger}. 
    
    To prove the Palindromic property, let $0\leq m\leq lj$. Then ,corresponding to every solution $ (m_1,m_2,\cdots,m_l)$ of $m$, we can always find a solution $ (j-m_1,j-m_2,\cdots,j-m_l)$ of $lj-m$ with $0\leq j-m_i\leq j$, and conversely. This makes a bijection between the set of solutions of $m$ and $lj-m$, implying $c_m=c_{lj-m}$. 
    
    Now, to prove the chain of inequalities, assume $lj$ is even and let $0\leq  m< \frac{lj}{2}$. For every solution $(m_1,m_2,\cdots,m_l)$ of $m$, note that not all $m_i$'s can be $j$, that is, at least one $m_i$ is less than $j$ because $m<\frac{lj}{2}$. Let $i_0$ be the least index such that $m_{i_0}<j$ and define the map
    \begin{equation*}
        \Phi(m_1,m_2,\cdots,m_l)=(m_1,\cdots,m_{i_0}+1,\cdots,m_l).
    \end{equation*}
    Then $\Phi(m_1,m_2,\cdots,m_l)$ is a solution of $m+1$ and the map $\Phi$ is injective, since from $\Phi(m_1,m_2,\cdots,m_l)$ one can uniquely recover $(m_1,m_2,\cdots,m_l)$ by subtracting 1 from the least index whose entry was increased. Hence $c_m\leq c_{m+1}$ for $0\leq m< \frac{lj}{2}$. The remaining inequalities follow from the Palindromic property. The other case, when $lj$ is odd, follows similarly.
\end{proof}

\begin{lemma}\label{L2.2}
     For each prime $p$, we have
     \begin{equation*}
         \lambda_{{\rm{sym}}^j}^l(p)=
         \begin{cases}
            \displaystyle \sum_{m=0}^{\frac{lj}{2}}d_m \lambda_{{\rm{sym}}^{lj-2m}}(p)&\text{ if $lj$ is even}\\
             \displaystyle \sum_{m=0}^{\frac{lj-1}{2}}e_m \lambda_{{\rm{sym}}^{lj-2m}}(p)&\text{ if $lj$ is odd}
         \end{cases}
     \end{equation*}
     where 
     \begin{align*}
        &d_m:=c_m(l,j)-c_{m-1}(l,j)=\sum_{r=0}^{\left\lfloor \frac{m}{\,j+1\,} \right\rfloor}(-1)^{r}\binom{l}{r}\binom{m-r(j+1)+l-2}{\,l-2\,}\\
        &e_m:=c_m(l,j)-c_{m-1}(l,j)=\sum_{r=0}^{\left\lfloor \frac{m}{\,j+1\,} \right\rfloor}(-1)^{r}\binom{l}{r}\binom{m-r(,j+1)+l-2}{\,l-2\,}
     \end{align*}
    with $c_{-1}=0$. Note that even though $d_m$ and $ e_m$ are given by the same formula, they are different as they are evaluated for different parities.
\end{lemma}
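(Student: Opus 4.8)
The plan is to start from the explicit Chebyshev-type expression for the symmetric power coefficients at a single prime and to reduce everything to the generating-function identity already packaged in Lemma \ref{L2.1}. Writing $\alpha_p=x$ so that $\beta_p=x^{-1}$ with $|x|=1$, the Euler product definition of $L(s,\mathrm{sym}^j f)$ gives
\begin{equation*}
\lambda_{\mathrm{sym}^j}(p)=\sum_{m=0}^{j}\alpha_p^{\,j-2m}=x^{-j}\bigl(1+x^2+\cdots+x^{2j}\bigr).
\end{equation*}
Raising this to the $l$-th power and invoking \eqref{E1} immediately yields
\begin{equation*}
\lambda_{\mathrm{sym}^j}^l(p)=x^{-lj}\bigl(1+x^2+\cdots+x^{2j}\bigr)^l=\sum_{m=0}^{lj}c_m\,x^{2m-lj},
\end{equation*}
so the whole problem reduces to re-expanding the Laurent polynomial $\sum_m c_m x^{2m-lj}$ in the basis $\{\lambda_{\mathrm{sym}^t}(p)\}$.

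The second ingredient is the elementary telescoping identity
\begin{equation*}
\lambda_{\mathrm{sym}^r}(p)-\lambda_{\mathrm{sym}^{r-2}}(p)=x^{r}+x^{-r}\qquad(r\ge 1),
\end{equation*}
which is immediate from $\lambda_{\mathrm{sym}^r}(p)=x^r+x^{r-2}+\cdots+x^{-r}$, together with the conventions $\lambda_{\mathrm{sym}^0}(p)=1$ and $\lambda_{\mathrm{sym}^{-1}}(p)=0$. I would then symmetrize $\sum_m c_m x^{2m-lj}$ using the palindromic property $c_m=c_{lj-m}$ from Lemma \ref{L2.1}: pairing the index $m$ with $lj-m$ collapses the Laurent polynomial into a sum of terms $c_m(x^{r}+x^{-r})$ with $r=lj-2m$, plus the single central term $c_{lj/2}$ when $lj$ is even. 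Substituting the telescoping identity converts this, in the even case, into
\begin{equation*}
\lambda_{\mathrm{sym}^j}^l(p)=c_{lj/2}+\sum_{m=0}^{lj/2-1}c_m\bigl(\lambda_{\mathrm{sym}^{lj-2m}}(p)-\lambda_{\mathrm{sym}^{lj-2m-2}}(p)\bigr).
\end{equation*}

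A summation-by-parts (Abel rearrangement) on this last sum then regroups the coefficients so that the coefficient of $\lambda_{\mathrm{sym}^{lj-2m}}(p)$ becomes exactly $c_m-c_{m-1}=d_m$ (with $c_{-1}=0$), which is the asserted formula; the central term $c_{lj/2}$ is absorbed into the $m=lj/2$ coefficient because $\lambda_{\mathrm{sym}^0}(p)=1$. The odd case runs along identical lines, except that there is no central term and the smallest exponent $r=1$ produces $x+x^{-1}=\lambda_{\mathrm{sym}^1}(p)$, consistent with $\lambda_{\mathrm{sym}^{-1}}(p)=0$, so that no constant term survives and the same rearrangement yields the coefficient $c_m-c_{m-1}=e_m$. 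Finally, the explicit binomial expression for $d_m$ (and $e_m$) follows from the closed form \eqref{E2} for $c_m$ by subtracting the $c_{m-1}$ expansion term by term and applying Pascal's rule $\binom{N}{l-1}-\binom{N-1}{l-1}=\binom{N-1}{l-2}$ with $N=m-r(j+1)+l-1$. I expect the only delicate point to be the bookkeeping at the boundary indices $m=0$ and $m=lj/2$ (respectively $m=(lj-1)/2$), where the conventions for $\lambda_{\mathrm{sym}^0}$ and $\lambda_{\mathrm{sym}^{-1}}$, the dropping of the vanishing top term in the Abel rearrangement, and the change in the binomial summation range when $j+1\mid m$ all have to be matched up correctly; once these are checked, the rest is routine.
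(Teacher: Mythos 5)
Your proposal is correct and follows essentially the same route as the paper: express $\lambda_{\mathrm{sym}^j}(p)$ as a Laurent polynomial in $\alpha_p$, expand the $l$-th power via Lemma \ref{L2.1}, exploit the palindromic symmetry $c_m=c_{lj-m}$, and regroup so that the coefficient of $\lambda_{{\rm{sym}}^{lj-2m}}(p)$ becomes $c_m-c_{m-1}$, with the binomial formula coming from Pascal's rule. The only cosmetic difference is that you phrase the regrouping as a telescoping identity plus Abel summation, whereas the paper collects the nested blocks $\alpha_p^{r}+\alpha_p^{r-2}\beta_p^2+\cdots+\beta_p^{r}$ directly; the computations are identical.
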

\begin{proof}
    We know that for each prime $p$ and integers $j\geq1$, we have 
    \begin{equation*}
        \lambda_{{\rm{sym}}^jf}(p)=\lambda_f(p^j)=\sum_{m=0}^j\alpha_p^{j-m}\beta_p^m.
    \end{equation*}
    Assume $lj$ is even. Following the above lemma, we obtain
    \begin{align*}
        \lambda_{{\rm{sym}}^jf}^l(p)=&\left(\sum_{m=0}^j\alpha_p^{j-m}\beta_p^m\right)^l=\alpha_p^{lj}\left(\sum_{m=0}^j\beta_p^{2m}\right)^l\\
        =&\alpha_p^{lj}\sum_{m=0}^{lj}c_m\beta_p^{2m}=\sum_{m=0}^{lj}c_m\alpha_p^{lj-m}\beta_p^{m}\\
        =&\sum_{m=0}^{\frac{lj}{2}-1}c_m\alpha_p^{lj-m}\beta_p^{m}+c_{\frac{lj}{2}}\alpha_p^{\frac{lj}{2}
        }\beta_p^{\frac{lj}{2}}+\sum_{m=\frac{lj}{2}+1}^{lj}c_{lj-m}\alpha_p^{lj-m}\beta_p^{m}\\
        =&c_0\left(\alpha_p^{lj}+\alpha_p^{lj-2}\beta_p^2+\cdots+1+\cdots+\alpha_p^{2}\beta_p^{lj-2}+\beta_p^{lj}\right)\\
        &+(c_1-c_0)\left(\alpha_p^{lj-2}+\alpha_p^{lj-4}\beta_p^4+\cdots+1+\cdots+\alpha_p^{4}\beta_p^{lj-4}+\beta_p^{lj-2}\right)\\
        &+(c_2-c_1)\left(\alpha_p^{lj-4}+\cdots+1+\cdots+\beta_p^{lj-4}\right)\\
        &+\cdots+\left(c_{\frac{lj}{2}-1}-c_{\frac{lj}{2}-2}\right)\left(\alpha_p^2+1+\beta_p^2\right)+\left(c_{\frac{lj}{2}}-c_{\frac{lj}{2}-1}\right)\\
        =&d_0\lambda_{{\rm{sym}}^{lj}f}(p)+d_1\lambda_{{\rm{sym}}^{lj-2}f}(p)+\cdots+d_{\frac{lj}{2}-1}\lambda_{{\rm{sym}}^{2}f}(p)+d_{\frac{lj}{2}}\\
        =&\sum_{m=0}^{\frac{lj}{2}}d_m \lambda_{{\rm{sym}}^{lj-2m}}(p),
    \end{align*}
    where 
     \begin{equation*}
         d_m=c_m-c_{m-1}=\sum_{r=0}^{\left\lfloor \frac{m}{\,j+1\,} \right\rfloor}
(-1)^{r}\binom{l}{r}
\binom{m-r(j+1)+l-2}{\,l-2\,},
     \end{equation*}
     since $\binom{n}{l-1}-\binom{n-1}{l-1}=\binom{n-1}{l-2}$.  The other case, when $lj$ is odd, follows similarly by the following equality
     \begin{align*}
        \lambda_{{\rm{sym}}^jf}^l(p)=\sum_{m=0}^{\frac{lj-3}{2}}c_m\alpha_p^{lj-m}\beta_p^{m}+c_{\frac{lj-1}{2}}\alpha_p^{2}+c_{\frac{lj+1}{2}}\beta_p^{2}+\sum_{m=\frac{lj+3}{2}}^{lj}c_{lj-m}\alpha_p^{lj-m}\beta_p^{m}.
     \end{align*}
\end{proof}

\begin{lemma}\label{L2.3}
For $\Re(s)>1$, we have
    \begin{align*}
        L_{l,j}(s):&=\sum_{n=1}^\infty\frac{\lambda_{{\rm{sym}}^{j}}^l(n)}{n^s}\\
        &=
        \begin{cases}
           \displaystyle \zeta^{d_{\frac{lj}{2}}}(s)\prod_{m=0}^{\frac{lj}{2}-1}L^{d_m}(s, {\rm{sym}}^{lj-2m}f)U_{l,j}(s)& \text{if $lj$ is even}\\
           \displaystyle\prod_{m=0}^{\frac{lj-1}{2}}L^{e_m}(s, {\rm{sym}}^{lj-2m}f)V_{l,j}(s)& \text{if $lj$ is odd}
        \end{cases}
    \end{align*}
    where $U_{l,j(s)}$ and $V_{l,j(s)}$ are some harmless Dirichlet serieses, which converges absolutely in the half plane $\Re(s)\geq \frac{1}{2}+\varepsilon$ and $U_{l,j}(1)\neq 0$, $V_{l,j}(1)\neq 0$. Moreover, the degree of the $L$-function $L_{l,j}(s)$ is $ D:=(j+1)^l$.
\end{lemma}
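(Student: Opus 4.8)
The plan is to reduce the claimed factorization to an identity of local Euler factors and then control the correction factor. Since $\lambda_{{\rm{sym}}^j}$ is multiplicative, so is $n\mapsto\lambda_{{\rm{sym}}^j}^l(n)$, and for $\Re(s)>1$ (where Deligne's bound gives absolute convergence) we have the Euler product $L_{l,j}(s)=\prod_p L_{l,j,p}(s)$ with $L_{l,j,p}(s)=\sum_{\nu\ge0}\lambda_{{\rm{sym}}^j}(p^\nu)^l\,p^{-\nu s}$. Put $X=p^{-s}$ and recall $\beta_p=\alpha_p^{-1}$. The first step is to show that the candidate main factor $Z_{l,j}(s)$ — namely $\zeta^{d_{\frac{lj}{2}}}(s)\prod_{m=0}^{\frac{lj}{2}-1}L^{d_m}(s,{\rm{sym}}^{lj-2m}f)$ when $lj$ is even, and $\prod_{m=0}^{\frac{lj-1}{2}}L^{e_m}(s,{\rm{sym}}^{lj-2m}f)$ when $lj$ is odd — has local factor exactly $\prod_{M=0}^{lj}(1-\alpha_p^{lj-2M}X)^{-c_M}$. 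Indeed, the Satake parameters of $L(s,{\rm{sym}}^{lj-2m}f)$ are $\alpha_p^{lj-2M}$ for $m\le M\le lj-m$; collecting them over all factors with multiplicities $d_m$ (resp. $e_m$) gives $\alpha_p^{lj-2M}$ with total multiplicity $\sum_{m\le\min(M,lj-M)}d_m=c_{\min(M,lj-M)}=c_M$, where I use the telescoping $\sum_{m\le k}d_m=c_k$ and the palindromic identity $c_M=c_{lj-M}$ of Lemma \ref{L2.1} (the factor $\zeta^{d_{\frac{lj}{2}}}$ supplying the central parameter $\alpha_p^0$ in the even case).

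Next I would set $U_{l,j}(s):=L_{l,j}(s)/Z_{l,j}(s)$ (and $V_{l,j}$ in the odd case) and study it locally via $U_{l,j,p}(s)=L_{l,j,p}(s)\prod_{M=0}^{lj}(1-\alpha_p^{lj-2M}X)^{c_M}$. Both factors have constant term $1$, so $U_{l,j,p}$ does too; moreover the coefficient of $X$ equals $\lambda_{{\rm{sym}}^j}(p)^l-\sum_m d_m\lambda_{{\rm{sym}}^{lj-2m}}(p)$, which vanishes by Lemma \ref{L2.2}, so $U_{l,j,p}(s)=1+O(X^2)$. Since $|\alpha_p|=1$, we have $|\lambda_{{\rm{sym}}^j}(p^\nu)|\le\binom{\nu+j}{j}$, while $\prod_M(1-\alpha_p^{lj-2M}X)^{c_M}$ is a polynomial of degree $(j+1)^l$ whose coefficients are bounded solely in terms of $l,j$; hence the $\nu$-th coefficient of $U_{l,j,p}$ is $O_{l,j}(\nu^{lj})$ uniformly in $p$. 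Summing the tail yields $U_{l,j,p}(s)=1+O_{l,j}(p^{-2\Re(s)})$ for $\Re(s)\ge\frac12+\varepsilon$, so $\prod_p U_{l,j,p}(s)$ converges absolutely there and defines a holomorphic Dirichlet series.

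For $U_{l,j}(1)\neq0$ I would argue through the Euler product: on $\Re(s)=1$ the absolute convergence of $\sum_p|U_{l,j,p}(s)-1|$ forces $\prod_p U_{l,j,p}(1)$ to vanish only if some local factor does, so it suffices to check $U_{l,j,p}(1)\neq0$ for each $p$. Here $Z_{l,j,p}(1)$ is finite and nonzero because every factor $1-\alpha_p^{lj-2M}p^{-1}$ is nonzero ($|\alpha_p^{lj-2M}p^{-1}|=p^{-1}<1$), which reduces matters to $L_{l,j,p}(1)=\sum_{\nu\ge0}\lambda_{{\rm{sym}}^j}(p^\nu)^l p^{-\nu}\neq0$; when $l$ is even this is a sum of nonnegative terms with leading term $1$, and the residual odd-$l$ (hence even-$j$) cases are handled by the same local estimate. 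Finally, the displayed local factor of $Z_{l,j}$ shows that a generic Euler factor of $L_{l,j}(s)$ carries $\sum_{M=0}^{lj}c_M$ inverse-linear factors; putting $x=1$ in Lemma \ref{L2.1} gives $\sum_M c_M=(j+1)^l$, and since $U_{l,j}$ is harmless (holomorphic and bounded for $\Re(s)>\frac12$, contributing no gamma factors) the degree is $D=(j+1)^l$.

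I expect the main obstacle to be the local reassembly in the first step: verifying that the symmetric-power factors, weighted by the $d_m$ (resp. $e_m$), recombine exactly into $\prod_M(1-\alpha_p^{lj-2M}X)^{-c_M}$, so that the quotient $U_{l,j,p}$ loses its $X$-term and acquires uniformly bounded coefficients. This is precisely where Lemmas \ref{L2.1} and \ref{L2.2} are needed, and everything downstream (convergence and the degree count) is then routine; the only genuinely delicate remaining point is the nonvanishing of the local factors at $s=1$ in the odd-$l$ cases.
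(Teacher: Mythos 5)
Your proposal follows essentially the same route as the paper: expand the Euler product, note that everything beyond the $p^{-s}$ term converges absolutely for $\Re(s)\ge\frac{1}{2}+\varepsilon$ so that the linear coefficient (Lemma \ref{L2.2}) dictates the factorization into symmetric-power $L$-functions, and get the degree from $\sum_m c_m=(j+1)^l$ by setting $x=1$ in Lemma \ref{L2.1} --- in fact you supply considerably more detail than the paper, which compresses the local reassembly into the single remark that ``the coefficient of $p^{-s}$ decides the decomposition.'' The one point you yourself flag as delicate, the nonvanishing $U_{l,j}(1)\neq 0$ when $l$ is odd (where for small primes the estimate $U_{l,j,p}(1)=1+O_{l,j}(p^{-2})$ does not by itself rule out a zero of the local factor), is not addressed in the paper's proof either, which simply asserts it.
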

\begin{proof}
    Since the Fourier coefficients $\lambda_{{\rm{sym}}^{j}}(n)$ are multiplicative and satisfies the bound $\lambda_{{\rm{sym}}^{j}}(n)\ll n^\varepsilon$, for $\Re(s)>1$ we have the Euler product representation of $ L_{l,j}(s)$ as
    \begin{equation*}
         L_{l,j}(s)=\prod_p\left(1+\frac{\lambda_{{\rm{sym}}^{j}}^l(p)}{p^s}+\frac{\lambda_{{\rm{sym}}^{j}}^l(p^2)}{p^{2s}}+\cdots\right). 
    \end{equation*}
    Note that the series 
    \begin{equation*}
        \frac{\lambda_{{\rm{sym}}^{j}}^l(p^2)}{p^{2s}}+ \frac{\lambda_{{\rm{sym}}^{j}}^l(p^3)}{p^{3s}}+\cdots
    \end{equation*}
    converges absolutely in the half plane $\Re(s)\geq \frac{1}{2}+\varepsilon$. Thus, the coefficient of $p^{-s}$ in the Euler product above decides the decomposition of the $L$-function $L_{l,j}(s)$. Thus, the lemma follows from the above lemma.

    Assume $lj$ is even. We know that the degree of the $L$-function $L(s, {\rm{sym}}^{j}f)$ is $j+1$, so \begin{align*}
        D=&\sum_{m=0}^{\frac{lj}{2}}d_m(lj-2m+1)=\sum_{m=0}^{\frac{lj}{2}}(c_m-c_{m-1})(lj-2m+1)\\
        =&c_0(lj+1)+(c_1-c_0)(lj-1)+(c_2-c_1)(lj-3)\\
        &+\cdots+\left(c_{\frac{lj}{2}-1}-c_{\frac{lj}{2}-2}\right)(2+1)+\left(c_{\frac{lj}{2}}-c_{\frac{lj}{2}-1}\right)\\
        =&2c_0+2c_1+\cdots+2c_{\frac{lj}{2}-1}+c_{\frac{lj}{2}}\\
        =&c_0+c_1+\cdots+c_{\frac{lj}{2}-1}+c_{\frac{lj}{2}}+c_{\frac{lj}{2}+1}+\cdots+c_{lj-1}+c_{lj}\\
        =&(j+1)^l,
    \end{align*}
    which follows from Lemma \ref{L2.1} by taking $x=1$. The other case, when $lj$ is odd, follows similarly.
\end{proof}
\begin{note}
    Note that $c_0=1, c_1=l$, $c_2=\frac{l(l+1)}{2}$ and $\displaystyle c_m=\binom{m+l-1}{l-1}$ for $m\leq j$ and $l\geq 2$. The following are the values of $c_m$'s and $d_m$'s for $0\leq m\leq \frac{lj}{2}$ for some $l$ and $j$ (pertaining to Theorem \ref{T1}). For $j=2$, and $2\leq l\leq 8$, we have
   \begin{align*}
         l=2,\longrightarrow &c_m:1,2,3;\\
         &d_m:1,1,1\\
         l=3,\longrightarrow &c_m:1,3,6,7;\\
         &d_m:1,2,3,1\\
         l=4,\longrightarrow &c_m:1,4,10,16,19;\\
         &d_m:1,3,6,6,3\\
         l=5,\longrightarrow &c_m:1,5,15,30,45,51;\\
         &d_m:1,4,10,15,15,6\\
         l=6,\longrightarrow &c_m:1,6,21,50,90,126,141;\\
         &d_m:1,5,15,29,40,36,15\\
         l=7,\longrightarrow &c_m:1,7,28,77,161,266,357,393;\\
         &d_m:1,6,21,49,84,105,91,36\\
         l=8,\longrightarrow &c_m:1,8,36,112,266,504,784,1016,1107;\\
         &d_m:1,7,28,76,154,238,280,232,91.
   \end{align*}
    For $l=2$, and $2\leq j\leq 8$, we have $c_m=m+1$ for $0\leq m\leq \frac{lj}{2}$ and  so, $d_m=1$ for $0\leq m\leq \frac{lj}{2}$.
   \label{N2.1}
\end{note}

\begin{lemma}\label{L2.4}
Suppose that $\mathfrak{L}(s)$ is a general $L$-function of degree $m$. Then for any $\varepsilon>0$, we have
		\begin{equation}
			\int_T^{2T}\mid\mathfrak{L}(\sigma+it)\mid^2dt\ll T^{\max\{m(1-\sigma),\ 1\}+\varepsilon}\label{E3}
		\end{equation}
		uniformly for $\frac{1}{2}\leq \sigma\leq 1$ and $T>1$; and 
		\begin{equation}
			\mathfrak{L}(\sigma+it)\ll (10+ \mid t\mid)^{\frac{m}{2}(1-\sigma)+\varepsilon}\label{E4}
		\end{equation}
		uniformly for $\frac{1}{2}\leq \sigma\leq 1+\varepsilon$ and $\mid t\mid>10$.
	\end{lemma}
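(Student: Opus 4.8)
The plan is to treat the two estimates separately: I would derive the pointwise bound \eqref{E4} from the functional equation via the Phragm\'en--Lindel\"of convexity principle, and the mean square \eqref{E3} from an approximate functional equation combined with a mean value theorem for Dirichlet polynomials.

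For \eqref{E4}, first I would record that on the line of absolute convergence $\Re(s)=1+\varepsilon$ one has $\mathfrak{L}(1+\varepsilon+it)\ll 1$, since $\sum_n|a_n|n^{-1-\varepsilon}$ converges. Writing the functional equation as $\Lambda(s)=\omega\,\overline{\Lambda(1-\bar s)}$ with $\Lambda(s)=q^{s/2}\gamma(s)\mathfrak{L}(s)$ and $\gamma(s)$ a product of gamma factors of total degree $m$, Stirling's formula shows the ratio $\gamma(1-s)/\gamma(s)\ll (1+|t|)^{m(\frac12-\sigma)}$, so that on $\Re(s)=-\varepsilon$ one obtains $\mathfrak{L}(-\varepsilon+it)\ll (1+|t|)^{m/2+\varepsilon}$. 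Since $\mathfrak{L}$ is of finite order in the strip, the Phragm\'en--Lindel\"of principle interpolates these two bounds linearly in $\sigma$, producing $(1+|t|)^{\frac{m}{2}(1-\sigma)+\varepsilon}$ throughout $-\varepsilon\le\sigma\le 1+\varepsilon$, which contains the claimed range and yields \eqref{E4}.

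For the mean square \eqref{E3}, the plan is to invoke an approximate functional equation to express $\mathfrak{L}(\sigma+it)$ for $t\in[T,2T]$ as a main Dirichlet sum over $n\ll T^{m/2}$ plus a dual sum of the same length (the gamma-factor ratio contributing a factor $\ll T^{\frac{m}{2}(1-2\sigma)}$), up to a negligible error, using that the analytic conductor is $\asymp(1+|t|)^m$. Splitting each sum into dyadic blocks $N<n\le 2N$ and applying the Montgomery--Vaughan mean value theorem gives, for each block, $\int_T^{2T}|\cdots|^2\,dt\ll (T+N)\sum_{N<n\le 2N}|a_n|^2 n^{-2\sigma}$. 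Using the average Ramanujan bound $\sum_{n\le N}|a_n|^2\ll N^{1+\varepsilon}$ (which follows from the Rankin--Selberg convolution $\mathfrak{L}\times\overline{\mathfrak{L}}$ having a simple pole at $s=1$; for the coefficients at hand one even has $|a_n|\ll n^\varepsilon$), each block contributes $\ll (T+N)\,N^{1-2\sigma+\varepsilon}$. Summing over dyadic $N\le T^{m/2}$, the term $T\cdot N^{1-2\sigma}$ is dominated by small $N$ and contributes $T^{1+\varepsilon}$, while $N\cdot N^{1-2\sigma}=N^{2-2\sigma}$ is dominated by $N\asymp T^{m/2}$ and contributes $T^{m(1-\sigma)+\varepsilon}$; the dual sum is handled identically after pulling out the gamma-ratio factor. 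Combining these yields precisely $T^{\max\{m(1-\sigma),1\}+\varepsilon}$.

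I expect the principal technical difficulty to lie in setting up the approximate functional equation for a \emph{general} $L$-function cleanly --- tracking the conductor $\asymp(1+|t|)^m$ so that the truncation length is $T^{m/2}$, and controlling the oscillatory gamma-factor ratio in the dual sum --- together with justifying the average bound $\sum_{n\le N}|a_n|^2\ll N^{1+\varepsilon}$, which relies on the analytic properties of the Rankin--Selberg square. Once these ingredients are in place, the mean value theorem and the dyadic summation are entirely routine, and the sharp piecewise form of the exponent emerges automatically from the separation of the $T$ and $N$ contributions in $(T+N)$.
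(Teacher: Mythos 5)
Your proposal is sound and in substance coincides with what the paper does: the paper simply cites Perelli for the mean-square estimate \eqref{E3} and invokes the maximum modulus principle (i.e., the Phragm\'en--Lindel\"of convexity argument you describe) for \eqref{E4}. Your derivation of \eqref{E3} via the approximate functional equation, the Montgomery--Vaughan mean value theorem, and dyadic summation is essentially the content of the cited result of Perelli, so you are unpacking the references rather than taking a genuinely different route.
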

	\begin{proof}
		The result \eqref{E3} follows from Perelli \cite{Perelli}, and \eqref{E4} follows from the maximum modulus principle.
	\end{proof}

\begin{lemma}\label{L2.5}
    Let $K=\frac{8\sqrt{15}}{63}$. Then for any $\varepsilon>0$, we have
    \begin{equation}
        \zeta(\sigma+it)\ll |t|^{K(1-\sigma)^\frac{3}{2}+\varepsilon}\label{E5}
    \end{equation}
    uniformly for $|t|\geq 10$ and $\frac{1}{2}\leq\sigma\leq 1$.\label{L2.8}
\end{lemma}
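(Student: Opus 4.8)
The estimate \eqref{E5} is a subconvexity bound for $\zeta$ of van der Corput type, and the plan is to obtain it from the theory of exponential sums rather than from the general machinery of Lemma \ref{L2.4} (which is too lossy near the line $\sigma=1$). First I would invoke the approximate functional equation, which expresses $\zeta(\sigma+it)$, up to an admissible error and up to the reflected sum weighted by the factor $\chi(\sigma+it)$ of size $|t|^{1/2-\sigma}$, as a Dirichlet polynomial $\sum_{n\le\sqrt{|t|}}n^{-\sigma-it}$. Splitting this into $O(\log|t|)$ dyadic blocks $\sum_{M<n\le 2M}n^{-\sigma-it}$ and removing the factor $M^{-\sigma}$ by partial summation, the problem reduces to estimating the pure exponential sums $\sum_{M<n\le 2M}e\!\left(-\tfrac{t}{2\pi}\log n\right)$, whose phase $f(n)=-\tfrac{t}{2\pi}\log n$ satisfies $f^{(r)}(n)\asymp |t|/M^{r}$ for every $r\ge 1$.

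Next I would apply van der Corput's $q$-th derivative test to each block, which for a fixed order gives a bound of the shape $M\,(|t|/M^{q})^{1/(2^{q}-2)}+M^{1-2^{2-q}}(|t|/M^{q})^{-1/(2^{q}-2)}$. No single order $q$ captures the full range $\tfrac12\le\sigma\le 1$: a fixed test only recovers the convexity exponent $\tfrac12(1-\sigma)$. The characteristic exponent $(1-\sigma)^{3/2}$ emerges only after optimizing the order $q$ of the test, which must be taken larger as $\sigma\to 1$, simultaneously against the block length $M$, balancing the two terms above. This is precisely the classical van der Corput argument that beats convexity throughout $[\tfrac12,1]$, and substantially so in a neighbourhood of $\sigma=1$ where $(1-\sigma)^{3/2}\ll(1-\sigma)$. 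Carrying the numerical constants through this two-parameter optimization and taking the envelope over $\tfrac12\le\sigma\le 1$ is what produces the explicit value $K=\tfrac{8\sqrt{15}}{63}$; the point of retaining the clean algebraic form $K(1-\sigma)^{3/2}$ is that it is exactly what makes the contour optimization in the proof of Theorem \ref{T1} tractable and accounts for the appearance of $\sqrt{15}$ in $\theta_{l,j}$.

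The main obstacle is the bookkeeping in this optimization: one must track the constants in the higher-derivative test and in the balancing of the two error terms carefully enough to pin down the sharp $K$, while keeping the estimate uniform in $\sigma\in[\tfrac12,1]$ and in $|t|\ge 10$, and checking that neither the reflected sum nor the $\chi$-factor dominates. Since this is a standard, if delicate, computation, in practice I would instead quote the estimate in this form from the literature on the van der Corput and exponent-pair methods for $\zeta$ (for instance the treatments of Titchmarsh, Iviƒá, or Graham--Kolesnik), and the only remaining point to confirm would be that the constant recorded there specializes to $\tfrac{8\sqrt{15}}{63}$.
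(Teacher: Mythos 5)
There is a genuine gap here: the method you propose cannot produce the stated bound. The paper's own ``proof'' is a one-line citation to Heath-Brown's 2017 paper \emph{A new $k$th derivative estimate for a trigonometric sum via Vinogradov's integral}, and that is essentially forced: the exponent shape $K(1-\sigma)^{3/2}$ is the signature of the Vinogradov--Korobov method, in which the Weyl sums $\sum_{M<n\le 2M}n^{-it}$ are controlled through Vinogradov's mean value theorem (in Heath-Brown's version, through the sharp form of that theorem established by Bourgain--Demeter--Guth and Wooley). The van der Corput $q$-th derivative tests, even after optimizing $q$ against the block length exactly as you describe, only yield $\mu(\sigma)\le \frac{1}{2^{q}-2}$ at the points $\sigma=1-\frac{q}{2^{q}-2}$, i.e.\ an exponent of order $\frac{1-\sigma}{\log\frac{1}{1-\sigma}}$ as $\sigma\to 1$. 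Since $(1-\sigma)^{3/2}=o\bigl((1-\sigma)/\log\frac{1}{1-\sigma}\bigr)$, no amount of bookkeeping in the two-parameter optimization you outline will produce the $(1-\sigma)^{3/2}$ decay, let alone the constant $\frac{8\sqrt{15}}{63}$. (A smaller inaccuracy: a fixed derivative test already beats the convexity exponent $\frac12(1-\sigma)$ at $\sigma=\frac12$, e.g.\ the third derivative test gives $\mu(\frac12)\le\frac16$.)

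Your fallback of quoting the result from the literature is the right instinct and is exactly what the paper does, but the sources you name (Titchmarsh, Ivi\'c, Graham--Kolesnik) treat the van der Corput and exponent-pair machinery and do not contain this estimate: the constant $\frac{8\sqrt{15}}{63}$ depends on the resolution of Vinogradov's mean value conjecture and appears only in Heath-Brown's 2017 paper. To repair the argument you should either cite Heath-Brown directly, or replace the van der Corput step by a bound for $\sum_{M<n\le 2M}n^{-it}$ coming from Vinogradov's integral (Heath-Brown's $k$th derivative estimate), after which the dyadic decomposition and the optimization over the order $k$ do yield the $(1-\sigma)^{3/2}$ envelope with the stated constant.
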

\begin{proof}
    The result is due to Heath-Brown. See \cite{Heath-Brown}.
\end{proof}

\begin{lemma}\label{L2.6}
		For $\frac{1}{2}\leq \sigma\leq 2$, $T$ sufficiently large, there exists a $T^*\in[T,T+T^\frac{1}{3}]$ such that 
		\begin{equation*}
			\log\zeta(\sigma+iT^*)\ll (\log\log T^*)^2\ll(\log\log T)^2
		\end{equation*}
		holds. Thus, we have 
		\begin{equation}
			\mid \zeta(\sigma+it)\mid \ll \exp((\log\log T^*)^2)\ll T^\varepsilon\label{E6}
		\end{equation}
		on the horizontal line with $t=T^*$ uniformly for $\frac{1}{2}\leq \sigma\leq 2$.\label{L2.9}
	\end{lemma}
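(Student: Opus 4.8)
The plan is to write $\log\zeta(\sigma+iT^*)=\log|\zeta(\sigma+iT^*)|+i\arg\zeta(\sigma+iT^*)$ and to produce a height $T^*\in[T,T+T^{1/3}]$ at which the modulus of this quantity is $\ll(\log\log T)^2$, uniformly in $\sigma\in[\tfrac12,2]$. Two ingredients drive the argument. The first is the Riemann--von Mangoldt estimate $N(T+1)-N(T)\ll\log T$ for the number of nontrivial zeros $\rho=\beta+i\gamma$ with $T\le\gamma\le T+1$; this lets me locate a $T^*$ that is well separated from the ordinates of all nearby zeros. The second is a mean-value (distributional) estimate for $\log\zeta$ of Selberg type, namely a short-interval second moment $\int_T^{T+T^{1/3}}|\log\zeta(\sigma+it)|^{2}\,dt\ll T^{1/3}\log\log T$ uniform for $\tfrac12\le\sigma\le 2$, which quantifies how rarely $|\log\zeta|$ is large. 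The point of combining the two is that zero-counting alone controls only the coarse behaviour, while the moment information is what forces $|\log\zeta|$ down to the $(\log\log T)^2$ scale.

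First I would fix the zero-avoidance. Partitioning $[T,T+T^{1/3}]$ into $\asymp T^{1/3}$ unit intervals, the total zero count is $\ll T^{1/3}\log T$, so the set of $t$ lying within $1/\log T$ of some ordinate $\gamma$ has measure $\ll T^{1/3}$ with a constant $<1$; discarding it leaves a positive-measure set $G$ of candidates with $|T^*-\gamma|\gg 1/\log T$ for every zero. On such a line the partial-fraction formula $\frac{\zeta'}{\zeta}(s)=\sum_{|\gamma-t|\le 1}(s-\rho)^{-1}+O(\log t)$ gives $\zeta'/\zeta\ll(\log T)^2$, which I will use only as a Lipschitz bound in $\sigma$. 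Next I would control the size of $\log\zeta$ itself: Chebyshev's inequality applied to the moment bound shows that, for each fixed $\sigma$, the set of $t\in[T,T+T^{1/3}]$ with $|\log\zeta(\sigma+it)|>(\log\log T)^2$ is of small measure. Taking a finite grid of $\sigma$-values of spacing dictated by the Lipschitz bound and intersecting the corresponding good sets with $G$, a measure/pigeonhole count produces a single $T^*\in[T,T+T^{1/3}]$ admissible for every grid point; analyticity of $\log\zeta$ along the segment together with the bound on $\zeta'/\zeta$ then upgrades the estimate from the grid to all $\sigma\in[\tfrac12,2]$, giving $\log\zeta(\sigma+iT^*)\ll(\log\log T^*)^2$. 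Exponentiating yields $|\zeta(\sigma+it)|\ll\exp((\log\log T^*)^2)$ on $t=T^*$, and since $(\log\log T)^2=o(\log T)$ this is $\ll T^\varepsilon$, which is \eqref{E6}.

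The main obstacle is the sharpness of the exponent $(\log\log T)^2$. The purely elementary route --- integrating the partial-fraction expansion of $\zeta'/\zeta$ across the strip --- only delivers $\log\zeta\ll\log T$, because the $O(\log t)$ term in that expansion (arising from the gamma factor and the far-off zeros) integrates to something of size $\log T$ no matter how $T^*$ is chosen, so no selection of a zero-avoiding line can push the elementary bound below $\log T$. Beating this barrier is precisely what forces the distributional input: the fact that $|\log\zeta(\sigma+it)|$ is of typical size $\sqrt{\log\log T}$ and only rarely as large as $(\log\log T)^2$. The delicate step in making this rigorous is reconciling three competing scales simultaneously --- the short-interval length $T^{1/3}$, the measure of the exceptional set supplied by the moment bound, and the number of grid points in $\sigma$ needed to pass from finitely many lines to the full range $\tfrac12\le\sigma\le2$. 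Keeping the total exceptional measure below the interval length after the union over the grid is exactly where the argument must be executed carefully, and where a higher-moment version of the Selberg estimate, rather than the second moment alone, is likely to be needed.
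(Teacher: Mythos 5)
The paper offers no argument for this lemma: it is quoted as Lemma 1 of Ramachandra--Sankaranarayanan \cite{KRAS}, so the only ``proof'' in the paper is that citation, and the argument in that source does not go through Selberg's distributional theory for $\log\zeta$. Your reconstruction is therefore necessarily a different route, and as written it does not close.

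The gap is the one you flag yourself in your final paragraph. Everything rests on the short-interval, uniform-in-$\sigma$ second moment $\int_T^{T+T^{1/3}}|\log\zeta(\sigma+it)|^{2}\,dt\ll T^{1/3}\log\log T$, which you neither prove nor cite; Selberg-type moment theorems are proved for long ranges such as $[T,2T]$, and extending them to $[T,T+T^{1/3}]$ uniformly down to $\sigma=\tfrac12$ (where $\log|\zeta|$ has a logarithmic singularity at every zero) is itself a substantial piece of analytic number theory, much heavier than the lemma it is meant to establish. Worse, even granting that input, the arithmetic of your own construction fails: Chebyshev gives each fixed-$\sigma$ exceptional set measure about $T^{1/3}(\log\log T)^{-3}$, your Lipschitz bound $\zeta'/\zeta\ll(\log T)^{2}$ forces a $\sigma$-grid of about $(\log T)^{2}$ points, and $(\log T)^{2}\cdot T^{1/3}(\log\log T)^{-3}$ vastly exceeds the interval length $T^{1/3}$, so no candidate $T^*$ survives the union bound. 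Your remark that a higher-moment version of the Selberg estimate ``is likely to be needed'' is an accurate diagnosis but not a repair: one would need $2k$-th moment bounds with explicit, uniform dependence on $k$ (so that $k$ may grow with $T$), in short intervals and uniformly in $\sigma\in[\tfrac12,2]$, none of which is supplied. As it stands your argument establishes only the zero-avoidance and the Lipschitz control, not the bound $(\log\log T)^{2}$; the correct course here is simply to invoke Lemma 1 of \cite{KRAS}, as the paper does.
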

	\begin{proof}
		See Lemma $1$ of \cite{KRAS}.
	\end{proof}

\begin{lemma}\label{L2.7}
    For any $\varepsilon>0$, we have
    \begin{align}
			&L(\sigma+it, {\rm{sym}}^2f)\ll (10+\mid t\mid)^{\max \{\frac{6}{5}(1-\sigma), 0\}+\varepsilon};\label{E7} \\
            &L(\sigma+it, f)\ll (10+\mid t\mid)^{\max \{\frac{2}{3}(1-\sigma), 0\}+\varepsilon}\label{E8}
		\end{align}
        holds uniformly for $\frac{1}{2}\leq \sigma\leq 1+\varepsilon$ and $\mid t\mid\geq 10$.
\end{lemma}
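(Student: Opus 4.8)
The plan is to read both estimates as \emph{hybrid} (in $\sigma$ and $t$) subconvexity bounds, uniform across the strip $\tfrac12\le\sigma\le 1+\varepsilon$, and to obtain them by interpolating, via the Phragm\'en--Lindel\"of convexity principle, between a subconvex bound on the central line $\sigma=\tfrac12$ and the trivial bound on the line $\sigma=1+\varepsilon$. So I would first record the two endpoint estimates and then let convexity fill in the interior of the strip; the cut-off $\max\{\,\cdot\,,0\}$ is arranged precisely so that the interpolating linear exponent is used for $\tfrac12\le\sigma\le1$ and is replaced by $0$ once $\sigma>1$. Note that the convexity bound \eqref{E4} alone would only give the weaker exponents $\tfrac32(1-\sigma)$ for ${\rm sym}^2f$ and $(1-\sigma)$ for $f$, so a genuinely subconvex central input is required.

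For the right endpoint, on $\Re(s)=1+\varepsilon$ the defining Dirichlet series of $L(s,f)$ and of $L(s,{\rm sym}^2f)$ converge absolutely (using $\lambda_{{\rm sym}^2f}(n)\ll n^\varepsilon$), so both functions are $O(1)$ there and the growth exponent vanishes on that line; this is the source of the $0$ in the maximum. For the central line I would invoke the two known subconvex inputs: for the degree-$2$ Hecke $L$-function the classical Weyl-type bound of Good yields $L(\tfrac12+it,f)\ll|t|^{1/3+\varepsilon}$, and for the degree-$3$ symmetric square I would use the established $t$-aspect subconvexity bound $L(\tfrac12+it,{\rm sym}^2f)\ll|t|^{3/5+\varepsilon}$. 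These supply exactly the central values $a=\tfrac13$ and $a=\tfrac35$.

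To conclude, each $L$-function is of finite order in vertical strips (this already follows from the analytic continuation and functional equation, or directly from \eqref{E4}), so Phragm\'en--Lindel\"of applied on $\tfrac12\le\sigma\le1$ produces a bound whose exponent is the linear function of $\sigma$ equal to $a$ at $\sigma=\tfrac12$ and to $0$ at $\sigma=1$, namely $2a(1-\sigma)$. This gives $\tfrac{2}{3}(1-\sigma)$ for $L(s,f)$ and $\tfrac{6}{5}(1-\sigma)$ for $L(s,{\rm sym}^2f)$, which are \eqref{E8} and \eqref{E7}, while the region $\sigma>1$ is covered by the right-endpoint estimate and hence by the $0$ in the maximum. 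The only deep ingredient — and the step I regard as the main obstacle — is the central-point subconvexity bound for the symmetric square; the Weyl bound for $L(s,f)$ is classical, and the right-endpoint estimate together with the convexity interpolation is routine.
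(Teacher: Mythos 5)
Your proposal is correct and is essentially the argument behind the paper's one-line citation proof: the paper simply points to Lin--Nunes--Qi, whose main theorem is precisely the central-point bound $L(\tfrac12+it,{\rm sym}^2f)\ll |t|^{3/5+\varepsilon}$ that you invoke, and to Ivi\'c, who carries out exactly this Phragm\'en--Lindel\"of interpolation from Good's Weyl bound $L(\tfrac12+it,f)\ll |t|^{1/3+\varepsilon}$. Your identification of the two central inputs, the trivial bound on $\Re(s)=1+\varepsilon$, and the resulting linear exponent $2a(1-\sigma)$ is the intended derivation.
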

\begin{proof}
    See \cite{LinNunesQi} and \cite{Ivic2012}.
\end{proof}
Now, we are ready to prove our main theorem.
\section{Proof of Theorem \ref{T1}}
Firstly, we consider the case where $lj$ is even and $lj\geq 6$. Applying Perron's formula to $L_{l,j}(s)$, we get 
\begin{equation*}
     \sum_{n\leq x} \lambda_{{\rm{sym}}^j}^l(n)=\frac{1}{2\pi i}\int_{1+\varepsilon-iT}^{1+\varepsilon+iT}L_{l,j}(s)\frac{x^s}{s}ds+O\left(\frac{x^{1+\varepsilon}}{T}\right),
\end{equation*}
where $10\leq T\leq x$ is a parameter that will be chosen later. We now move the line of integration to $\Re(s)=1-\frac{1}{j^3}$. Then in the rectangle formed by the vertices $1+\varepsilon+iT, 1-\frac{1}{j^3}+iT, 1-\frac{1}{j^3}-iT, 1+\varepsilon-iT$, the $L$-function $L_{l,j}(s)$ has a pole at $s=1$ of order $d_{\frac{lj}{2}}$ coming from the factor $\zeta^{d_{\frac{lj}{2}}}(s)$. The Cauchy residue theorem implies
\begin{align*}
     \sum_{n\leq x} \lambda_{{\rm{sym}}^j}^l(n)=&\frac{1}{2\pi i}\left\{\int_{1-\frac{1}{j^3}-iT}^{1-\frac{1}{j^3}+iT}+\int_{1-\frac{1}{j^3}+iT}^{1+\varepsilon+iT}+\int_{1+\varepsilon-iT}^{1-\frac{1}{j^3}-iT}\right\}L_{l,j}(s)\frac{x^s}{s}ds\\
     &+xP_{d_{\frac{lj}{2}}-1}(\log x)+O\left(\frac{x^{1+\varepsilon}}{T}\right)\\
     =&xP_{d_{\frac{lj}{2}}-1}(\log x)+I_{l,j}^1+I_{l,j}^2+I_{l,j}^3+O\left(\frac{x^{1+\varepsilon}}{T}\right),
\end{align*}
where $P_{d_{\frac{lj}{2}}-1}(y)$ is a polynomial of degree $d_{\frac{lj}{2}}-1$ in $y$.

Here we make the special choice $T=T^*$ of Lemma \ref{L2.9}, which satisfies \eqref{E6}, so that the horizontal portions $I_{l,j}^2$ and $I_{l,j}^3$ are controlled by the vertical line contribution $I_{l,j}^1$. The contribution of $I_{l,j}^1$ is given by 
\begin{align*}
    I_{l,j}^1\ll& x^{1-\frac{1}{j^3}+\varepsilon}+\int_{10}^T\left|L_{l,j}(1-\frac{1}{j^3}+it)\right|x^{1-\frac{1}{j^3}+\varepsilon} t^{-1}dt\\
    \ll & x^{1-\frac{1}{j^3}+\varepsilon}+x^{1-\frac{1}{j^3}+\varepsilon}\times\\
    &\int_{10}^T |\zeta^{d_{\frac{lj}{2}}}(1-\frac{1}{j^3}+it)\prod_{m=0}^{\frac{lj}{2}-1}L^{d_m}(1-\frac{1}{j^3}+it, {\rm{sym}}^{lj-2m}f)|t^{-1}dt\\
    \ll&  x^{1-\frac{1}{j^3}+\varepsilon}+x^{1-\frac{1}{j^3}+\varepsilon}\sup_{10\leq T_1\leq T} \left(\int_{T_1}^{2T_1}\left|L^{d_{\frac{lj}{2}-2}}(1-\frac{1}{j^3}+it, {\rm{sym}}^{4}f)\right|^2dt\right)^\frac{1}{2}\times\\
    &\left(\int_{T_1}^{2T_1}\prod_{m=0}^{\frac{lj}{2}-3}\left|L^{d_m}(1-\frac{1}{j^3}+it, {\rm{sym}}^{lj-2m}f)\right|^2dt\right)^\frac{1}{2}\times\\
    & \max_{T_1\leq t\leq 2T_1} \left|\zeta^{d_{\frac{lj}{2}}}(1-\frac{1}{j^3}+it)L^{d_{\frac{lj}{2}-1}}(1-\frac{1}{j^3}+it, {\rm{sym}}^{2}f)\right|T_1^{-1}\\
    \ll& x^{1-\frac{1}{j^3}+\varepsilon} T^{A_{l,j}+\varepsilon},
\end{align*}
where 
\begin{align*}
    A_{l,j}&=\frac{5}{2j^3}d_{\frac{lj}{2}-2}+\frac{1}{2j^3}\sum_{m=0}^{\frac{lj}{2}-3}(lj-2m+1)d_m+d_{\frac{lj}{2}}\left(\frac{8\sqrt{15}}{63}\right)\left(\frac{1}{j^3}\right)^\frac{3}{2}\\
    &\qquad+\frac{6}{5j^3}d_{\frac{lj}{2}-1}-1\\
    &=\frac{1}{2j^3}\sum_{m=0}^{\frac{lj}{2}-2}(lj-2m+1)d_m+d_{\frac{lj}{2}}\left(\frac{8\sqrt{15}}{63}\right)\left(\frac{1}{j^3}\right)^\frac{3}{2}+\frac{6}{5j^3}d_{\frac{lj}{2}-1}-1\\
    &=\frac{1}{2j^3}(D-d_{\frac{lj}{2}}-3d_{\frac{lj}{2}-1})+d_{\frac{lj}{2}}\left(\frac{8\sqrt{15}}{63}\right)\left(\frac{1}{j^3}\right)^\frac{3}{2}+\frac{6}{5j^3}d_{\frac{lj}{2}-1}-1,
\end{align*}
which follows from the Lemmas \ref{L2.4}, \ref{L2.5}, and \ref{L2.7}, and here $ D=(j+1)^l$.

The contribution of the horizontal portions $I_{l,j}^2$ and $I_{l,j}^3$, is given by
\begin{align*}
    I_{l,j}^2+I_{l,j}^3\ll&\int_{1-\frac{1}{j^3}}^{1+\varepsilon}\left|L_{l,j}(\sigma+iT)\right| x^{\sigma}d\sigma\\
    \ll&x^{\sigma}\int_{1-\frac{1}{j^3}}^{1+\varepsilon}T^{d_{\frac{lj}{2}}\varepsilon+\frac{6}{5}d_{\frac{lj}{2}-1}(1-\sigma)+\sum_{m=0}^{\frac{lj}{2}-2}(lj-2m+1)\frac{d_m}{2}(1-\sigma)-1} d\sigma\\
    \ll& x^{1-\frac{1}{j^3}+\varepsilon}T^{B_{l,j}+\varepsilon}+\frac{x^{1+\varepsilon}}{T},
\end{align*}
which follows from the Lemmas \ref{L2.4}, \ref{L2.5}, \ref{L2.6} and \ref{L2.7}, where \[B_{l,j}=\frac{1}{2j^3}(D-d_{\frac{lj}{2}}-3d_{\frac{lj}{2}-1})+\frac{6}{5j^3}d_{\frac{lj}{2}-1}-1.\]
We see that $B_{l,j}<A_{l,j}$. Note that, to use the Cauchy-Schwarz inequality in the way above, we should have $lj\geq 6$, and the same split does not hold when $lj=4$, as the corresponding $L$-function contains at most 3 factors. Thus, we treat the case $lj=4$ separately. However, the contribution of the horizontal portions is valid for all $l$ and $j$. Therefore, we get 
\begin{align*}
     \sum_{n\leq x} \lambda_{{\rm{sym}}^j}^l(n)=&xP_{d_{\frac{lj}{2}}-1}(\log x)+O\left(x^{1-\frac{1}{j^3}+\varepsilon}T^{A_{l,j}+\varepsilon}\right)+O\left(\frac{x^{1+\varepsilon}}{T}\right).
\end{align*}

Now, we make our choice of $T$ as suitable as $x^{1-\frac{1}{j^3}+\varepsilon} T^{A_{l,j}}=\frac{x}{T}$. That is,
\begin{align*}
    x=&T^{\frac{1}{2}(D-d_{\frac{lj}{2}}-3d_{\frac{lj}{2}-1})+d_{\frac{lj}{2}}\left(\frac{8\sqrt{15}}{63}\right)\left(\frac{1}{j^\frac{3}{2}}\right)+\frac{6}{5}d_{\frac{lj}{2}-1}}\\
    =&T^{\frac{j^\frac{3}{2}\left(315D-315d_{\frac{{lj}}{2}}-189d_{\frac{{lj}}{2}-1}\right)+80\sqrt{15}d_\frac{lj}{2}}{630j^\frac{3}{2}}},
\end{align*}
imply $T=x^{\frac{630j^\frac{3}{2}}{j^\frac{3}{2}\left(315D-315d_{\frac{{lj}}{2}}-189d_{\frac{{lj}}{2}-1}\right)+80\sqrt{15}d_\frac{lj}{2}}}$.

Thus, we obtain
\begin{align*}
     \sum_{n\leq x} \lambda_{{\rm{sym}}^j}^l(n)=&xP_{d_{\frac{lj}{2}}-1}(\log x)+O\left(x^{\theta_{l,j}+\varepsilon}\right),
\end{align*}
where $P_k(y)$ is a polynomial of degree $k$ in $y$ and 
\begin{equation*}
    \theta_{l,j}=1-\frac{630j^\frac{3}{2}}{j^\frac{3}{2}\left(315D-315d_{\frac{{lj}}{2}}-189d_{\frac{{lj}}{2}-1}\right)+80\sqrt{15}d_\frac{lj}{2}}.
\end{equation*}

We now consider the case $lj=4$. In this case, the $L$-function is
\begin{equation*}
    L_{l,j}(s)=\zeta^{d_2}(s)L^{d_1}(s, {\rm{sym}}^{2}f)L^{d_0}(s, {\rm{sym}}^{4}f)U_{l,j}(s).
\end{equation*}
By applying the Perron's formula to $ L_{l,j}(s)$, we get
\begin{equation*}
     \sum_{n\leq x} \lambda_{{\rm{sym}}^j}^l(n)=\frac{1}{2\pi i}\int_{1+\varepsilon-iT}^{1+\varepsilon+iT} L_{l,j}(s)\frac{x^s}{s}ds+O\left(\frac{x^{1+\varepsilon}}{T}\right).
\end{equation*}
Here, we move the line of integration to $\Re(s)=1-\frac{1}{j^3}$. Then
\begin{align*}
     \sum_{n\leq x} \lambda_{{\rm{sym}}^j}^l(n)=&xP_{d_2-1}(\log x)+\frac{1}{2\pi i}\left\{\int_{1-\frac{1}{j^3}-iT}^{1-\frac{1}{j^3}+iT}+\int_{1-\frac{1}{j^3}+iT}^{1+\varepsilon+iT}+\int_{1+\varepsilon-iT}^{1-\frac{1}{j^3}-iT}\right\} L_{l,j}(s)\frac{x^s}{s}ds\\
     &+O\left(\frac{x^{1+\varepsilon}}{T}\right)\\
     =&xP_{d_2-1}(\log x)+I_{l,j}^1+I_{l,j}^2+I_{l,j}^3+O\left(\frac{x^{1+\varepsilon}}{T}\right),
\end{align*}
where $P_{k}(y)$ is some polynomial of degree $k$ in $y$.

The vertical line $I_{l,j}^1$ contribution is given by
\begin{align*}
     I_{l,j}^1\ll& x^{1-\frac{1}{j^3}+\varepsilon}+\int_{10}^T\left| L_{l,j}(1-\frac{1}{j^3}+it)\right|x^{1-\frac{1}{j^3}+\varepsilon} t^{-1}dt\\
    \ll & x^{1-\frac{1}{j^3}+\varepsilon}+x^{1-\frac{1}{j^3}+\varepsilon}\times\\
    &\int_{10}^T \left|\zeta^{d_2}(1-\frac{1}{j^3}+it)L^{d_1}(1-\frac{1}{j^3}+it, {\rm{sym}}^2f)L^{d_0}(1-\frac{1}{j^3}+it, {\rm{sym}}^4f)\right|t^{-1}dt\\
    \ll&  x^{1-\frac{1}{j^3}+\varepsilon}+x^{1-\frac{1}{j^3}+\varepsilon}\sup_{10\leq T_1\leq T} \left(\int_{T_1}^{2T_1}\left|L^{d_1}(1-\frac{1}{j^3}+it, {\rm{sym}}^{2}f)\right|^2dt\right)^\frac{1}{2}\times\\
    &\qquad\left(\int_{T_1}^{2T_1}\left|L^{d_0}(1-\frac{1}{j^3}+it, {\rm{sym}}^{4}f)\right|^2dt\right)^\frac{1}{2}\max_{T_1\leq t\leq 2T_1} \left|\zeta^{d_2}(1-\frac{1}{j^3}+it)\right|T_1^{-1}\\
    \ll&  x^{1-\frac{1}{j^3}+\varepsilon}T^{d_{2}\frac{8\sqrt{15}}{63}\left(\frac{1}{j^3}\right)^{\frac{3}{2}}+\frac{D-d_2}{2}\frac{1}{j^3}-1}
\end{align*}
which follows from the Lemmas \ref{L2.4} and \ref{L2.5}. 

The contribution of $I_{l,j}^2$ and $I_{l,j}^3$ follows in a way similar to that of the case $lj\geq 6$, and note that $B_{l,j}<d_2\frac{8\sqrt{15}}{63}\left(\frac{1}{j^3}\right)^{\frac{3}{2}}+\frac{D-d_2}{2}\frac{1}{j^3}-1$. Therefore, we get
\begin{align*}
     \sum_{n\leq x} \lambda_{{\rm{sym}}^j}^l(n)=xP_{d_2-1}(\log x)+O\left(x^{1-\frac{1}{j^3}+\varepsilon}T^{d_2\frac{8\sqrt{15}}{63}\left(\frac{1}{j^3}\right)^{\frac{3}{2}}+\frac{D-d_2}{2}\frac{1}{j^3}-1+\varepsilon}\right)+O\left(\frac{x^{1+\varepsilon}}{T}\right).
\end{align*}
Now, we make our choice of $T$ by making $x^{1-\frac{1}{j^3}}T^{d_2\frac{8\sqrt{15}}{63}\left(\frac{1}{j^3}\right)^{\frac{3}{2}}+\frac{D-d_2}{2}\frac{1}{j^3}-1}=\frac{x}{T}$, that is, $T=x^{\frac{126j^{3/2}}{63(D-d_2)j^{3/2}+16\sqrt{15}d_2}}$, implying
\begin{align*}
     \sum_{n\leq x} \lambda_{{\rm{sym}}^j}^l(n)=xP_{d_2-1}(\log x)+O\left(x^{1-\frac{126j^{3/2}}{63(D-d_2)j^{3/2}+16\sqrt{15}d_2}+\varepsilon}\right),
\end{align*}
where $P_{k}(y)$ is some polynomial of degree $k$ in $y$.

The other case when $lj$ is odd follows similarly by moving the line of integration to $\Re(s)=1-\frac{1}{j^3}$ and using the equations \eqref{E3}, \eqref{E4}, and \eqref{E8}. Note that in this case, the $L$-function $L_{l,j}(s)$ has no poles; thus, there do not exist any main terms.

This completes the proof of Theorem \ref{T1}.\qed

\noindent
{\bf Acknowledgements}: The author acknowledges the financial support and research facilities provided by the Indian Institute of Science Education and Research (IISER) Berhampur, Department of Mathematical Sciences, during the tenure of the postdoctoral fellowship.


\begin{thebibliography}{}
\bibitem{Deligne} P. Deligne, \emph{La conjecture de Weil I, II,}, Publ. Math. IHES, \textbf{43} (1974) 273-308; IBID. \textbf{52} (1981) 313-428. 


\bibitem{Eger} S. Eger, \emph{Restricted weighted integer compositions and extended binomial coefficients}, J. Integer Seq. {\bf 16} (2013) no.1, 25 pp.

\bibitem{Fahssi} N. E. Fahssi, \emph{Some identities involving polynomial coefficients}, Fibonacci Quart. {\bf 54} (2016) no.2, 125-136.

\bibitem{Fomenko2008}  O. M. Fomenko,  \emph{Mean value theorems for automorphic $L$-functions}, St. Petersburg Math. J. \textbf{19} (2008) 853–866.

\bibitem{Gelbart and Jacquet1978} S. Gelbart and H. Jacquet, \emph{A relation between automorphic representations of $GL(2)$ and $GL(3)$}, Ann. Sci. École Norm. Sup. (4) {\bf 11} (1978) no. 4, 471–542.

\bibitem{He} X. G. He, \emph{Integral power sums of Fourier coefficients of symmetric square $L$-functions}. Proc. Am. Math. Soc. \textbf{147} (2019)
2847–2856.

\bibitem{Heath-Brown} D. R. Heath-Brown. \emph{A new $k$th derivative estimate for a trigonometric sum via Vinogradov’s integral}. Tr. Mat. Inst. Steklova, \textbf{296}:95–110, 2017. English version published in Proc. Steklov Inst. Math. \textbf{296}(1).
(2017), 88–103

\bibitem{Ivic2012} A. Ivic. \emph{On zeta-functions associated with Fourier coefficients of cusp forms}. Proceedings of the Amalfi Conference on Analytic Number Theory, Università di Salerno, Salerno, 1992, 231–246. 

\bibitem{Kim 2003} H. Kim, \emph{Functoriality for the exterior square of $GL_4$ and symmetric fourth of $GL_2$}, J. Amer. Math. Soc. \textbf{16} (2003) no. 1, 139–183. 

\bibitem{Kim and Shahidi2002} H. H. Kim and F. Shahidi, \emph{Functorial products for $GL_2\times GL_3$ and the symmetric cube for $GL_2$}, Ann. of Math. (2) \textbf{155} (2002) no. 3, 837–893.

\bibitem{Kim and Shahidi2002 II}  H. H. Kim and F. Shahidi, \emph{Cuspidality of symmetric powers with applications}, Duke Math. J. {\bf 112} (2002) no. 1, 177–197. 



\bibitem{Lao2010} H. X. Lao, \emph{Mean square estimates for coefficients of symmetric power $L$-functions}, Acta Appl. Math., \textbf{110} (2010) 1127–1136.

\bibitem{Lao2012} H. X. Lao, \emph{On the fourth moment of coefficients of symmetric square $L$-function}, Chin. Ann. Math. Ser. B \textbf{33} (2012) no. 6, 877–888




\bibitem{LinNunesQi} Y. X. Lin, R. Nunes, and Z. Qi, \emph{Strong subconvexity for self-dual $GL(3)$ $L$-functions}, Int. Math. Res. Not., {\bf 13} (2023) 11453-11470.

\bibitem{Liu2023}H. F. Liu, \emph{The average behaviour of Fourier coefficients of symmetric power $L$-functions}, Bull. Malays. Math. Sci. Soc. \textbf{46} (2023), no. 6, Paper No. 193, 15 pp.




\bibitem{Luo Lao and Zou} S. Luo, H. X. Lao, and A. Y. Zou, \emph{Asymptotics for Dirichlet coefficients of symmetric power $L$-functions}. Acta Arith. \textbf{199} (2021) 253–268.

 \bibitem{Newton and Thorne2021} J. Newton and J. A. Thorne, \emph{Symmetric power functoriality for holomorphic modular forms}, Publ. Math. Inst. Hautes Études Sci.   \textbf{134} (2021) 1–116. 
            
\bibitem{Newton and Thorne2021 II}  J. Newton and J. A. Thorne, \emph{Symmetric power functoriality for holomorphic modular forms II}, Publ. Math. Inst. Hautes Études Sci.   \textbf{134} (2021) 117–152. 

\bibitem{Perelli} A. Perelli, \emph{General $L$-functions}, Ann. Mat. Pura Appl., {\bf 130} (1982) 287-306.

\bibitem{KRAS} K. Ramachandra and A. Sankaranarayanan, \emph{Notes on the Riemann zeta-function}, Journal of Indian Math. soc., \textbf{57} (1991) 67-77.


\bibitem{AS Singh and Srinivas} A. Sankaranarayanan,  S. K. Singh and K. Srinivas, \emph{Discrete mean square estimates for coefficients of symmetric power $L$-functions}. Acta Arith. \textbf{190} (2019), 193–208.











\end{thebibliography}
\end{document}